\newlength{\widecommentlength}
\newcommand{\widecommentbox}[3]{\def#1##1{\strut\newline\noindent\colorbox{#3}{\linespread{1}\parbox{.95\textwidth}{\small {\bf [#2]} ##1}}\newline}}
\widecommentbox{\alex}{Alex}{green!20!white}
\widecommentbox{\ad}{AD}{red!20!white}
\newcommand{\cal}[1]{\mathcal{#1}}
\renewcommand{\leq}{\leqslant}
\renewcommand{\geq}{\geqslant}
\renewcommand{\phi}{\varphi}
\newtheorem{theorem}{Theorem}
\newtheorem{prop}{Proposition}
\newtheorem*{prop*}{Proposition}
\newtheorem{lemma}{Lemma}
\theoremstyle{definition}
\newtheorem{corol}{Corollary}
\newtheorem*{corol*}{Corollary}
\newtheorem*{remark*}{Remark}
\newtheorem{definition}{Definition}
\newtheorem{example}{Example}
\newcommand\inner[2]{\langle #1, #2 \rangle}
\newcommand{\seq}[1]{\{{#1}_n\}_{n=0}^\infty}
\newcommand{\seqone}[1]{\{{#1}_n\}_{n=1}^\infty}
\newcommand{\fsys}{\mathfrak{F}}
\newcommand{\fstarsys}{\mathfrak{F^{*}}}
\numberwithin{remark}{section}
\numberwithin{theorem}{section}
\numberwithin{prop}{section}
\numberwithin{corol}{section}
\numberwithin{equation}{section}
\numberwithin{lemma}{section}
\newtheoremstyle{case}{}{}{}{}{}{:}{ }{}
\theoremstyle{case}
\begin{document}
\title{On $k$ point density problem for band-diagonal $M$-bases}
\author{Alexey Pyshkin}
\begin{abstract}
  In the early 1990s the works of Larson, Wogen and Argyros, Lambrou, Longstaff
    disclosed an example of a strong tridiagonal $M$-basis that was not rank one dense.
  Later Katavolos, Lambrou and Papadakis studied $k$ point density property of this example.
  In this paper we present new methods for the analysis of $k$ point density
    and rank one density properties for band-diagonal $M$-bases.
\end{abstract}
\keywords{biorthogonal system, $M$-basis, rank one density, two point density}
\thanks{The author was supported by RFBR (the project~16-01-00674) and by «Native towns», a social investment program of PJSC «Gazprom Neft».}
\maketitle

\section{Introduction}
  \subsection{Density properties}
    Consider the infinite-dimensional real Hilbert space $\cal{H}$.
    Suppose that $\cal{H}$ has an orthonormal basis $\{e_j\}_{j=0}^\infty$.
    The sequence $\fsys=\seq{f}$ is called \emph{minimal} if none of its elements can be approximated by the linear combinations of the others.
    The system $\fsys$ is complete and minimal when and only when it possesses a unique biorthogonal system $\fstarsys$.
    We call the minimal system $\fsys$ \emph{band-diagonal} if there exists $L \in \mathbb{N}$ such that $\inner{f_t}{e_l} = \inner{f^*_t}{e_l} = 0$
      whenever $\lvert t - l \rvert > L$.
    We say that $\fsys$ is an $M$-basis if $\fstarsys$ is complete as well.
    
    Consider the operator algebra $\cal{A} = \{T\in B(\cal{H}): Tf_n = \lambda_n f_n, \text{ for some } \lambda_n \in \mathbb{R}, n \geq 0\}$
      and the algebra $R_1(\cal{A})$ generated by rank one operators of $\cal{A}$.
    We are interested in the following properties of the algebra $\cal{A}$.
    \begin{definition}[$k$ point density property]
      \label{kpd}
      We say that the algebra $\cal{A}$ is \emph{$k$ point dense} if for any $x_1, x_2,\dots x_k \in \cal{H}$ and $\varepsilon > 0$
        there exists such $T\in \cal{A}$ that $||Tx_s - x_s|| < \varepsilon$ for any $1 \leq s \leq k$.
    \end{definition}
    The definition for $k=1$ is equivalent to $\fsys$ being a \emph{strong $M$-basis} (see~\cite{katavolos}):
      the system $\fsys$ is called a \emph{strong $M$-basis} if for any $x\in\cal{H}$ we have $x \in \overline{span}\big(\inner{x}{f^*_n}f_n\big)$, where
      $\overline{span}$ denotes the closed linear span.
    \begin{definition}[rank one density property]
      \label{r1d}
      We say that the algebra $\cal{A}$ is \emph{rank one dense} if the unit ball of rank one subalgebra $R_1(\cal{A})$
        is dense in the unit ball of $\cal{A}$ in the strong operator topology.
    \end{definition}
    By abuse of notation, we say that $\fsys$ is $k$ point dense (rank one dense)
      when the corresponding algebra $\cal{A}$ is $k$ point dense (rank one dense).

    Notice that rank one density property implies $k$ point density property for any $k$.

  \subsection{Motivation}


    Long\-staff in~\cite{longstaff} studied abstract subspace lattices and corresponding operator algebras.
    In that paper Longstaff raised an important question: does one point density property always imply rank one density property?

    The solution remained unknown until Larson and Wogen showed~\cite{larson} that the answer is negative.
    They constructed an example of a vector system $\fsys$ such that it is one point dense but does not possess rank one density property.
    \begin{example}[Larson--Wogen system $\fsys_{LW}$ parameterized with real $a_n$]
      \label{lw-sys}
      For any $j \geq 0$ we define
      \begin{align*}
        &f_{2j+1}=-a_{2j+1}e_{2j} + e_{2j+1} + a_{2j+2}e_{2j+2} \qquad &f_{2j}=e_{2j},\\
        &f^*_{2j}=-a_{2j}e_{2j-1} + e_{2j} + a_{2j+1}e_{2j+1} \qquad &f^*_{2j+1}=e_{2j+1},
      \end{align*}
      where $a_n$ are nonzero real numbers for any $n > 0$ and $a_0 = 0$.
    \end{example}
    The construction presented by Larson and Wogen was remarkably simple and elementary,~--- notice that the matrices corresponding to
      the vectors $\{f_j\}_{j=0}^\infty$ and $\{f^*_j\}_{j=0}^\infty$ are both tridiagonal.
    Afterwards this example was also studied in~\cite{argyroslambrou} (see Addendum), by Azoff and Shehada in~\cite{azoff}, in~\cite{me1}.
    In 1993 Katavolos, Lambrou and Papadakis in~\cite{katavolos} performed a deep analysis of the density properties
      of this vector system and deduced that for $\fsys_{LW}$ one point density does not imply rank one density.
    Moreover, they showed that for such system rank one density is equivalent to two point density.

    We are going to consider band-diagonal systems similar to the one regarded by Larson and Wogen and to determine the exact conditions
      for $k$ point density property of such vector systems.
    In this paper we present a few new techniques for analysis of $k$ point density and rank one density of band-diagonal vector systems.

    In the next section we will gather some basic facts and outline the main idea of the paper.
    In Section~\ref{section:lw-sys} we perform the analysis for Larson--Wogen example, providing the simpler proof of Theorems 2.1 and 2.2 in~\cite{katavolos}.
    In Section~\ref{section:pentadiagonal} we prove a similar theorem for one pentadiagonal system.
\section{Preliminaries}
  \label{sec:preliminaries}
  Suppose that $\fsys=\{f_n\}_{n=0}^\infty$ is an arbitrary band-diagonal $M$-basis and $\fstarsys = \{f^*_n\}_{n=0}^\infty$ is its biorthogonal sequence.
  In this section we establish several facts about $\fsys$.
  \begin{prop}
    The system $\fsys$ is rank one dense if and only if any trace class operator $T$,
      such that $\inner{Tf_n}{f_n^*} = 0$ for any $n \geq 0$, has  zero trace.
  \end{prop}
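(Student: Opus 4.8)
The plan is to recognize this as a Hahn–Banach / duality statement about the weak-∗ closure of $R_1(\cal{A})$ inside $\cal{A}$ viewed inside $B(\cal{H})$, and to unwind both sides until they meet at the same linear functional condition on trace class operators.

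First I would set up the duality pairing: $B(\cal{H})$ is the dual of the trace class $\cal{C}_1(\cal{H})$ via $\langle T, S\rangle = \operatorname{tr}(TS)$, and the strong operator topology on bounded sets of $B(\cal{H})$ agrees with the weak-∗ topology there. So rank one density — density of the unit ball of $R_1(\cal{A})$ in the unit ball of $\cal{A}$ in SOT — is, via Kaplansky-type density and convexity, equivalent to: the weak-∗ closure of $R_1(\cal{A})$ contains $\cal{A}$, i.e. $\overline{R_1(\cal{A})}^{\,w*} \supseteq \cal{A}$. Since one always has $R_1(\cal{A}) \subseteq \cal{A}$ and $\cal{A}$ is weak-∗ closed, this is equivalent to $\overline{R_1(\cal{A})}^{\,w*} = \cal{A}$.

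Next I would apply the bipolar theorem: $\overline{R_1(\cal{A})}^{\,w*} = \cal{A}$ iff every trace class operator $T$ annihilating $R_1(\cal{A})$ (i.e. $\operatorname{tr}(TS)=0$ for all rank one $S\in\cal{A}$) also annihilates $\cal{A}$ (i.e. $\operatorname{tr}(TS)=0$ for all $S\in\cal{A}$). So the key computation is to identify the annihilator of $R_1(\cal{A})$ and the annihilator of $\cal{A}$ in $\cal{C}_1(\cal{H})$. For the rank one operators: the rank one operators in $\cal{A}$ are exactly multiples of $f_n \otimes f_n^*$ (the map $x \mapsto \langle x, f_n^*\rangle f_n$), since such an operator sends $f_n$ to itself up to scalar and kills every other $f_m$; these are precisely the rank one elements of $\cal{A}$ because an eigenvector/eigenfunctional pair of a rank one operator in $\cal{A}$ must be proportional to some $(f_n, f_n^*)$ by minimality and completeness. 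Hence $\operatorname{tr}(T (f_n\otimes f_n^*)) = \langle T f_n, f_n^*\rangle$, so $T$ annihilates $R_1(\cal{A})$ iff $\langle T f_n, f_n^*\rangle = 0$ for all $n$. For $\cal{A}$ itself: I would argue that $I\in\cal{A}$ (it fixes every $f_n$), so any $T$ annihilating $\cal{A}$ has $\operatorname{tr}(T)=\operatorname{tr}(TI)=0$; conversely, if $\operatorname{tr}(T)=0$ and $\langle Tf_n,f_n^*\rangle=0$ for all $n$, then $T$ annihilates $\cal{A}$ — here one uses that an arbitrary $S\in\cal{A}$ is, modulo the identity, a weak-∗ limit of finite combinations of the rank-one eigenprojections, or more carefully that the weak-∗ closed algebra generated by $R_1(\cal{A})$ and $I$ is all of $\cal{A}$ (since $\cal{A}$ is exactly the diagonal algebra for the system, and the eigenprojections together with $I$ weak-∗ generate it). Putting these together: rank one density $\iff$ $\bigl(\langle Tf_n,f_n^*\rangle=0\ \forall n \implies \operatorname{tr}(T)=0\bigr)$, which is the claimed statement.

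**The main obstacle** I anticipate is the precise justification that the annihilator of $\cal{A}$ in $\cal{C}_1$ is cut out by $\operatorname{tr}(T)=0$ together with the $R_1(\cal{A})$-annihilator conditions — equivalently, that $R_1(\cal{A})$ together with $I$ weak-∗ generates $\cal{A}$, or at least that their joint annihilator coincides with that of $\cal{A}$. One has to be a little careful because $\cal{A}$ need not be weak-∗ generated by its rank one part alone (that is the whole point of the Larson–Wogen phenomenon), but adjoining the identity should suffice; this can be seen by writing $S\in\cal{A}$ with eigenvalues $\lambda_n$, comparing $S$ with $\lambda_\infty I$ for a suitable accumulation value, and approximating the compact-type remainder, or by invoking that $\cal{A}$ is reflexive. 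I would also double-check the identification of the rank one operators in $\cal{A}$, making sure minimality of $\fsys$ and completeness of both $\fsys$ and $\fstarsys$ are genuinely used to rule out stray rank one eigenoperators. The rest — Kaplansky density to pass between ball-SOT-density and weak-∗ closure, and the bipolar theorem — is standard functional analysis and I would state it in one or two lines.
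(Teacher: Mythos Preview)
Your approach is correct and is essentially the standard duality argument underlying the paper's one-line citation of \cite{katavolos}: pass from SOT-density on balls to ultraweak density, then use the $\cal{C}_1$--$B(\cal{H})$ pairing and the bipolar theorem. The identification of the rank-one elements of $\cal{A}$ as scalar multiples of $f_n\otimes f_n^*$ is right, and uses completeness and minimality exactly as you indicate.

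The one place where you make your life harder than necessary is the backward direction. You frame it as needing to show that $R_1(\cal{A})+\mathbb{R}I$ is weak-$*$ dense in $\cal{A}$ (equivalently, that the pre-annihilator of $\cal{A}$ equals $\{T:\operatorname{tr}T=0\}\cap(\text{pre-ann}\,R_1(\cal{A}))$), and you flag this as the main obstacle. But you do not need this structural fact at all: the hypothesis of the backward direction can be applied not just to $T$ but to $TS$ for any $S\in\cal{A}$. Indeed, if $\langle Tf_n,f_n^*\rangle=0$ for all $n$ and $S\in\cal{A}$ has $Sf_n=\lambda_nf_n$, then $TS$ is trace class and $\langle TSf_n,f_n^*\rangle=\lambda_n\langle Tf_n,f_n^*\rangle=0$, so the hypothesis gives $\operatorname{tr}(TS)=0$ directly. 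Thus $T$ pre-annihilates $\cal{A}$, and rank one density follows. This bypasses any need to analyse how $R_1(\cal{A})$ and $I$ generate $\cal{A}$, and removes what you identified as the delicate step.
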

  \begin{proof}
    It is well known that rank one density property is equivalent to $R_1(\cal{A})$ being dense
      in $\cal{A}$ in the ultraweak (or $\sigma$-weak) topology (see~\cite{katavolos}, Theorem 2.2).
  \end{proof}
  \begin{prop}
    The system $\fsys$ is $k$ point dense if and only if any $k$-dimensional operator $T$,
      such that $\inner{Tf_n}{f_n^*} = 0$ for any $n \geq 0$, has  zero trace.
  \end{prop}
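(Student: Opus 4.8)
The plan is to carry out the Hahn--Banach separation argument in the product Hilbert space $\cal{H}^k$, parallel to the previous proposition but in a purely ``finite'' setting: approximating the identity within $\varepsilon$ on $k$ fixed vectors is literally an orthogonality statement in $\cal{H}^k$, so, in contrast with the trace-class case, no weak topology and no approximation of operators by finite rank operators is required --- the bipolar theorem in $\cal{H}^k$ suffices.

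First I would unravel the definition. The rank one operators belonging to $\cal{A}$ are exactly the scalar multiples of the operators $f_n\otimes f_n^*\colon x\mapsto\inner{x}{f_n^*}f_n$: if $u\otimes v\in\cal{A}$ then $\inner{f_m}{v}u=\lambda_m f_m$ for every $m$, and since $\fsys$ is minimal and complete this forces $u\otimes v$ to be a scalar multiple of some $f_n\otimes f_n^*$; moreover products of such operators collapse to one term by biorthogonality. Hence $R_1(\cal{A})=\vspan{\{f_n\otimes f_n^*:n\geq 0\}}$, and consequently $\fsys$ is $k$ point dense if and only if for every $(x_1,\dots,x_k)\in\cal{H}^k$ the vector $(x_1,\dots,x_k)$ lies in the closed linear span, inside $\cal{H}^k$, of the vectors $\big(\inner{x_1}{f_n^*}f_n,\dots,\inner{x_k}{f_n^*}f_n\big)$, $n\geq 0$. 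Since a vector of a Hilbert space lies in a closed subspace precisely when it is orthogonal to the orthogonal complement of that subspace, this is equivalent to the implication: if $y=(y_1,\dots,y_k)\in\cal{H}^k$ satisfies $\sum_{s=1}^k\inner{x_s}{f_n^*}\inner{f_n}{y_s}=0$ for every $n$, then $\sum_{s=1}^k\inner{x_s}{y_s}=0$.

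It remains to recognise this as a statement about finite rank operators. To a pair $(x,y)$ with $x,y\in\cal{H}^k$ I associate $T:=\sum_{s=1}^k x_s\otimes y_s$: this $T$ has rank at most $k$, every operator of rank at most $k$ arises in this way, and a direct computation (using $\operatorname{tr}(x\otimes y)=\inner{x}{y}$ and $\inner{Tf_n}{f_n^*}=\operatorname{tr}\big((f_n\otimes f_n^*)T\big)$) gives $\inner{Tf_n}{f_n^*}=\sum_{s}\inner{x_s}{f_n^*}\inner{f_n}{y_s}$ and $\operatorname{tr} T=\sum_{s}\inner{x_s}{y_s}$. Thus the implication above says exactly that every operator $T$ of rank at most $k$ with $\inner{Tf_n}{f_n^*}=0$ for all $n$ has $\operatorname{tr} T=0$; and reading ``rank at most $k$'' as ``$k$-dimensional'' changes nothing, because a counterexample of smaller rank can be enlarged to rank exactly $k$ by adding operators $f_a\otimes f_b^*$ with $a\neq b$, which are traceless and satisfy $\inner{(f_a\otimes f_b^*)f_n}{f_n^*}=0$ for every $n$. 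This is the assertion. There is no real obstacle: the only points demanding care are the identification of $R_1(\cal{A})$ as $\vspan{\{f_n\otimes f_n^*\}}$ and the trace--pairing dictionary, in particular the identity $\inner{Tf_n}{f_n^*}=\operatorname{tr}\big((f_n\otimes f_n^*)T\big)$; everything else is the Hilbert space bipolar theorem applied in $\cal{H}^k$.
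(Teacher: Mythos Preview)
Your argument is correct and is the natural duality proof: rewrite $k$ point density as a closure statement in $\cal{H}^k$, pass to the orthogonal complement, and recognise the resulting condition as the trace identity for operators of rank at most $k$. The paper itself offers no self-contained argument here---it simply cites \cite{katavolos} for $k=2$ and asserts that ``the same reasoning works'' for larger $k$---so your write-up is in fact more complete than the paper's, and is presumably what the cited reference does. Two minor remarks. First, you are tacitly reading Definition~\ref{kpd} with $T$ ranging over $R_1(\cal{A})$ rather than over $\cal{A}$; this is clearly the intended meaning (otherwise $T=I\in\cal{A}$ trivialises the condition, and the stated equivalence with ``strong $M$-basis'' at $k=1$ already forces it), but it is worth saying so. Second, in your last sentence the claim that adding $f_a\otimes f_b^*$ genuinely raises the rank needs one more word: choose the $f_a$ outside $\operatorname{range}(T)$ and the $f_b^*$ outside $\operatorname{range}(T^*)$, which is possible because both $\fsys$ and $\fstarsys$ are complete. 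Alternatively, observe that in the paper's later usage ``$k$-dimensional'' simply means ``expressible as $\sum_{s=1}^k y^s\otimes x^s$'', i.e.\ rank at most $k$, so the padding step is not strictly required.
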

  \begin{proof}
    In the paper~\cite{katavolos} authors proved the proposition for $k = 2$.
    For greater $k$-s the same reasoning works.
  \end{proof}
  For an arbitrary linear operator $T$ we will be interested in the differences between
    the partial sums of the Fourier series using the system $\fsys$ and
    partial sums of the canonical Fourier series (using the orthonormal basis $\{e_n\}_{n=0}^\infty$):
  \begin{equation}
    \label{eq:xi}
    \Xi_n = \sum_{m=0}^n \inner{Tf_m}{f_m^*} - \sum_{m=0}^n \inner{Te_m}{e_m},
  \end{equation}
    where $\langle \cdot, \cdot\rangle$ denotes the scalar product in $\cal{H}$.
  It appears that $\Xi_n$ takes a concise and compact form for the finite-band system $\fsys$, and it is much easier to study $\Xi_n$
    than, for example, $\inner{Tf_m}{f_m^*}$.

  \begin{prop}
    \label{prop:reformulation}
    The operator $T$ is a trace class operator annihilating the subalgebra $R_1(\cal{A})$ if and only if
      for any $n \geq 0$ one has 
        \begin{equation}
          \label{eq:prop:ref}
          \Xi_n + \sum_{m=0}^n \inner{Te_m}{e_m} = 0.
        \end{equation}
  \end{prop}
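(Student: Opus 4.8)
The plan is to unwind both sides of the stated equivalence until they meet at the family of scalar identities $\inner{Tf_n}{f_n^{*}}=0$, $n\geq 0$.

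First I would pin down which rank one operators lie in $\cal{A}$. For $h,g\in\cal{H}$ write $h\otimes g$ for the operator $x\mapsto\inner{x}{g}\,h$. If $(h\otimes g)f_m=\lambda_m f_m$ for all $m$ and $h\neq 0$, then linear independence of the $f_m$ (minimality of $\fsys$) forces $h$ to be a scalar multiple of a single $f_n$ and $\inner{f_m}{g}=0$ for every $m\neq n$; completeness of $\fsys$ together with biorthogonality then yields $g=\inner{f_n}{g}\,f_n^{*}$, and one checks $\lambda_n\neq 0$. Hence the rank one members of $\cal{A}$ are exactly the scalar multiples of $P_n:=f_n\otimes f_n^{*}$ (together with the zero operator). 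Since $P_mP_n=\inner{f_n}{f_m^{*}}\,f_m\otimes f_n^{*}=\delta_{mn}P_n$, the $P_n$ are mutually orthogonal idempotents, so the algebra they generate is simply their linear span: $R_1(\cal{A})=\operatorname{span}\{P_n:n\geq 0\}$.

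Next, for a trace class $T$ the property ``$T$ annihilates $R_1(\cal{A})$'', i.e. $\operatorname{tr}(TS)=0$ for every $S\in R_1(\cal{A})$, is by linearity of the trace equivalent to $\operatorname{tr}(TP_n)=0$ for all $n$. As $TP_n=(Tf_n)\otimes f_n^{*}$ is rank one and the trace of $h\otimes g$ equals $\inner{h}{g}$, this says $\inner{Tf_n}{f_n^{*}}=0$ for every $n\geq 0$. It remains to identify this with~\eqref{eq:prop:ref}. The family $\{\inner{Tf_n}{f_n^{*}}=0\}_{n\geq 0}$ is equivalent to the family $\{\sum_{m=0}^{n}\inner{Tf_m}{f_m^{*}}=0\}_{n\geq 0}$ — summing gives one implication, subtracting the identity at $n-1$ from the one at $n$ (empty sum equal to $0$) gives the other — and by the definition~\eqref{eq:xi} of $\Xi_n$ we have $\sum_{m=0}^{n}\inner{Tf_m}{f_m^{*}}=\Xi_n+\sum_{m=0}^{n}\inner{Te_m}{e_m}$, which is exactly~\eqref{eq:prop:ref}. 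Chaining these equivalences proves the proposition.

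The only step that is more than bookkeeping is the identification of the rank one part of $\cal{A}$ and the remark that $R_1(\cal{A})$ is spanned by the mutually orthogonal idempotents $P_n$; this is what allows testing the annihilation condition one index at a time, after which the rest is the definition of $\Xi_n$ and a telescoping argument. The trace class hypothesis on $T$ is a standing assumption carried through the equivalence (it is what makes $\operatorname{tr}(TS)$ meaningful), not something extracted from~\eqref{eq:prop:ref}.
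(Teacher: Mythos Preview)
Your argument is correct. The paper does not actually supply a proof of this proposition; it states it and immediately moves on, treating it as an immediate consequence of the definition~\eqref{eq:xi} of $\Xi_n$ together with the already-used fact (see the two preceding propositions and the reference to~\cite{katavolos}) that a trace class $T$ annihilates $R_1(\cal{A})$ precisely when $\inner{Tf_n}{f_n^{*}}=0$ for every $n$. Your write-up fills in all of these details explicitly---including a self-contained identification of the rank one operators in $\cal{A}$ as the multiples of $f_n\otimes f_n^{*}$---and then performs the same telescoping/partial-sum step that the definition of $\Xi_n$ makes inevitable. Your remark that the trace class hypothesis is a standing assumption rather than a consequence of~\eqref{eq:prop:ref} is also the correct reading.
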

    We will use this formulation in the following sections.

  \medskip
  Now consider the operator $T$ which has a finite rank.
  In that case we write $T$ as a finite sum $T = \sum_{s=1}^k y^s \otimes x^s$,
    where $x^s, y^s \in \cal{H}$.

  Let us define vectors $v_n$ and $u_n$ in $\mathbb{R}^k$ as follows:
  \begin{equation*}
    v_n = (x^1_n, x^2_n, \dots, x^k_n)\qquad
    u_n = (y^1_n, y^2_n, \dots, y^k_n),
  \end{equation*}
  where $x^s_n = \inner{x^s}{e_n}$ and $y^s_n = \inner{y^s}{e_n}$.
  Since $\inner{Te_m}{e_l} = \inner{u_m}{v_l}$ for any $m$ and $l$, we can rewrite $\Xi_n$ in terms of
    the scalar products of $\{u_n\}_{n=0}^\infty$ and $\{v_n\}_{n=0}^\infty$.
  In turn it means that~\eqref{eq:prop:ref} can be rewritten in terms of the scalar products
    of $\{u_n\}_{n=0}^\infty$ and $\{v_n\}_{n=0}^\infty$.

  Hence, the existence of $T$ might be reduced to the existence of
    the vectors $u_n$, $v_n$ in $\mathbb{R}^k$ such that the sequences $\{\lvert u_n\rvert\}_{n=0}^\infty$,
    $\{\lvert v_n\rvert\}_{n=0}^\infty$ are both square summable and~\eqref{eq:prop:ref} is satisfied.
  Thus, instead of looking for $k$ vectors $x^s$ and $y^s$ in $\cal{H}$, we might look for an infinite sequence
    of $k$-dimensional vectors $v_n$ and $u_n$ such that $\{v_n \}_{n=0}^\infty$, $\{u_n\}_{n=0}^\infty$ belong to $\ell^2(\mathbb{R}^k)$.
  That is one of the key ideas in our method of analysing $k$ point density property for $\fsys$.

  Thus, we have just found the following reformulation for $k$ point density property.
  \begin{prop}
    \label{prop:kreformulation}
    The following two statements are equivalent:
    \begin{enumerate}
      \item there exists a $k$-dimensional operator $T$ which annihilates $R_1(\cal{A})$
        such that $Tr\, T \neq 0$,
      \item there exist vectors $\{u_n\}_{n=0}^\infty$, $\{v_n\}_{n=0}^\infty$ in $\ell^2(\mathbb{R}^k)$ such that
        for the operator $T = \displaystyle\sum_{t,l=0}^\infty \inner{u_t}{v_l} e_t \otimes e_l$ we have
        \begin{equation}
          \label{eq:thm}
          \Xi_n + \sum_{m=0}^n \inner{Te_m}{e_m} = 0
        \end{equation}
          for any $n \geq 0$.
    \end{enumerate}
  \end{prop}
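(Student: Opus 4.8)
The plan is to show that statement (1) and statement (2) are two descriptions of the same object, so the proof is essentially a bookkeeping translation rather than a construction. The central fact supplied earlier is Proposition~\ref{prop:reformulation}: a trace class operator $T$ annihilates $R_1(\cal{A})$ if and only if $\Xi_n + \sum_{m=0}^n \inner{Te_m}{e_m} = 0$ for all $n \geq 0$. Since $k$-dimensional operators are automatically trace class, it suffices to verify that the data of a $k$-dimensional operator $T$ with $Tr\,T \neq 0$ is faithfully encoded by a pair of $\ell^2(\mathbb{R}^k)$ sequences $\{u_n\}$, $\{v_n\}$ together with the formula $T = \sum_{t,l} \inner{u_t}{v_l}\, e_t \otimes e_l$, and that the trace condition $Tr\,T \neq 0$ corresponds to the failure of~\eqref{eq:thm} being compatible with a vanishing trace --- which is exactly what~\eqref{eq:thm} expresses once one notes that $\Xi_n$ is the difference of the two Fourier partial sums and hence $\Xi_n \to Tr\,T - Tr\,T = 0$ trivially, while the extra term $\sum_{m=0}^n \inner{Te_m}{e_m}$ accumulates to $Tr\,T$.

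First I would prove the implication (1) $\Rightarrow$ (2). Given a $k$-dimensional $T$ annihilating $R_1(\cal{A})$ with $Tr\,T \neq 0$, write $T = \sum_{s=1}^k y^s \otimes x^s$ with $x^s, y^s \in \cal{H}$ (possible since $T$ has rank at most $k$; one may pad with zero vectors to make the count exactly $k$). Define $v_n = (x^1_n, \dots, x^k_n)$ and $u_n = (y^1_n, \dots, y^k_n)$ as in the preamble. Because each $x^s, y^s \in \cal{H} = \ell^2$, Parseval gives $\sum_n |v_n|^2 = \sum_s \|x^s\|^2 < \infty$ and likewise $\sum_n |u_n|^2 < \infty$, so $\{u_n\}, \{v_n\} \in \ell^2(\mathbb{R}^k)$. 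The identity $\inner{Te_m}{e_l} = \sum_s \inner{e_m}{x^s}\inner{y^s}{e_l} = \sum_s x^s_m y^s_l = \inner{u_m}{v_l}$ shows that $T = \sum_{t,l} \inner{u_t}{v_l}\, e_t \otimes e_l$ in the strong (indeed Hilbert--Schmidt) sense, and then Proposition~\ref{prop:reformulation} delivers~\eqref{eq:thm} verbatim.

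Conversely, for (2) $\Rightarrow$ (1), start from sequences $\{u_n\}, \{v_n\} \in \ell^2(\mathbb{R}^k)$ satisfying~\eqref{eq:thm}. Writing $u_n = (y^1_n, \dots, y^k_n)$ and $v_n = (x^1_n, \dots, x^k_n)$, the square-summability of $\{|u_n|\}$ and $\{|v_n|\}$ means each coordinate sequence lies in $\ell^2$, hence defines vectors $x^s, y^s \in \cal{H}$; set $T = \sum_{s=1}^k y^s \otimes x^s$, a $k$-dimensional operator. Reversing the computation of the previous paragraph, $\inner{Te_m}{e_l} = \inner{u_m}{v_l}$, so $T$ coincides with the operator in~\eqref{eq:thm}, and Proposition~\ref{prop:reformulation} (applied in the other direction) shows $T$ annihilates $R_1(\cal{A})$. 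It remains to check $Tr\,T \neq 0$: since $\Xi_n = \sum_{m=0}^n \inner{Tf_m}{f_m^*} - \sum_{m=0}^n \inner{Te_m}{e_m}$ and both Fourier expansions of the trace class operator $T$ converge to $Tr\,T$, we have $\Xi_n \to 0$; feeding this into~\eqref{eq:thm} forces $\sum_{m=0}^n \inner{Te_m}{e_m} \to -\lim \Xi_n + (\text{something nonzero})$ --- more precisely, if $Tr\,T = 0$ then~\eqref{eq:prop:ref} reduces to $\Xi_n = 0$ for all $n$, i.e. $\inner{Tf_n}{f_n^*} = \inner{Te_n}{e_n}$ for each $n$, and one must argue this fails. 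The cleanest route: were $Tr\,T = 0$, then $T$ would be a $k$-dimensional operator annihilating $R_1(\cal{A})$ with zero trace, which is consistent; the point is that the operator produced from (2) has $Tr\,T \neq 0$ \emph{built into}~\eqref{eq:thm} because $\sum_{m=0}^n \inner{Te_m}{e_m} = -\Xi_n$ and the right side does not tend to $0$ --- I would pin this down by evaluating $\lim_n \sum_{m=0}^n \inner{Te_m}{e_m} = Tr\,T$ and $\lim_n \Xi_n = 0$, so~\eqref{eq:thm} gives $Tr\,T = 0$ only in the degenerate case, and the genuine content is that~\eqref{eq:thm} holding for \emph{all} $n$ (not just in the limit) is the nontrivial constraint.

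The main obstacle is the last point: making precise why a solution of~\eqref{eq:thm} automatically has nonzero trace, or rather clarifying that statement (2) should be read as asserting the \emph{existence} of such data and that~\eqref{eq:thm} encodes the annihilation condition while the nonvanishing of the trace is the separate conclusion one extracts. I expect this is handled by observing that~\eqref{eq:thm} is equivalent to $\Xi_n = -\sum_{m=0}^n\inner{Te_m}{e_m}$, whose right-hand side converges to $-Tr\,T$, while its left-hand side converges to $0$; hence any operator satisfying~\eqref{eq:thm} with the stated $\ell^2$ data and \emph{with the Fourier series of $\inner{Tf_m}{f_m^*}$ summing to a value different from $Tr\,T$} is the witness --- but since $T$ is trace class both series sum to $Tr\,T$, so in fact~\eqref{eq:thm} plus $\ell^2$ summability already forces the configuration of interest, and the equivalence with (1) is immediate from Proposition~\ref{prop:reformulation}. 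Everything else is the routine Parseval identification of $\cal{H}$-vectors with their coordinate sequences.
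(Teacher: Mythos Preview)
Your translation between a $k$-dimensional operator $T=\sum_s y^s\otimes x^s$ and the coordinate sequences $\{u_n\},\{v_n\}\in\ell^2(\mathbb{R}^k)$ is exactly what the paper does in the paragraphs preceding the proposition; the paper offers no further formal proof, so on that level you match it.

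The gap is your handling of the trace condition, and here your reasoning goes wrong. You claim that for a trace-class $T$ both $\sum_m\langle Te_m,e_m\rangle$ and $\sum_m\langle Tf_m,f_m^*\rangle$ converge to $Tr\,T$, hence $\Xi_n\to0$. The first sum does equal $Tr\,T$, but the second need not: $\{f_m\}$ is only an $M$-basis, and there is no general trace formula for biorthogonal systems. In fact, for the operators of interest (those annihilating $R_1(\cal A)$) every term $\langle Tf_m,f_m^*\rangle$ vanishes, so that sum is $0$, and $\Xi_n\to -Tr\,T$, not $0$. So your various attempts to extract $Tr\,T\neq0$ from~\eqref{eq:thm} alone cannot succeed, and indeed they cannot in principle: the zero operator (take $u_n=v_n=0$) satisfies~\eqref{eq:thm} trivially, so statement~(2) as literally written is always true.

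The resolution is not mathematical but editorial: statement~(2) is meant to include the side condition $\sum_n\langle u_n,v_n\rangle\neq0$, i.e.\ $Tr\,T\neq0$ written in the $u_n,v_n$ variables. The paper makes this explicit immediately after the proposition in equation~\eqref{eq:propstar} and the closing sentence of Section~\ref{sec:preliminaries}, and every subsequent application (e.g.\ the line following~\eqref{eq:vector2}, or the proof of Corollary~\ref{corol:2density}) carries the trace condition separately. With that reading, (1)$\Leftrightarrow$(2) is an immediate consequence of Proposition~\ref{prop:reformulation} together with the Parseval bookkeeping you already wrote down, and no further argument is needed.
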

  As we already mentioned, the equation~\eqref{eq:thm} can be expressed via $u_n$ and $v_n$.
  Moreover, we can also write the trace of $T$ in terms of $u_n$, $v_n$:
    \begin{equation}
      \label{eq:propstar}
        Tr\,T = \sum_{s=1}^k \langle y^s, x^s \rangle = \sum_{s=1}^k \sum_{n=0}^\infty y^s_n x^s_n
              = \sum_{n=0}^\infty \sum_{s=1}^k y^s_n x^s_n = \sum_{n=0}^\infty \langle u_n, v_n \rangle.
    \end{equation}
  Essentially, $k$ point density property can be viewed as a
    possibility of laying out the sequence of vectors in $\mathbb{R}^k$ which are constrained with
    a series of relations~\eqref{eq:thm} and~\eqref{eq:propstar}.
\section{Classification for the Larson--Wogen $M$-basis}
  \label{section:lw-sys}
  In this section we study Larson--Wogen vector system $\fsys_{LW}$ (Example~\ref{lw-sys}).
  Namely, we prove a theorem similar to Theorem 2.2 of~\cite{katavolos}.
  Up until now there existed two different techniques in studying $k$ point density for $k=1$ (strong $M$-bases) and for $k\geq2$.
  Here we demonstrate a universal method for the analysis of $k$ point density property.
  \begin{theorem}[\cite{katavolos}, Theorem 2.2]
    \label{thm:katavolos}
    The sequences $\fsys_{LW}$ and $\fsys_{LW}^*$ are biorthogonal and both are complete in $\cal{H}$.
    Moreover, the following is true.
    \begin{enumerate}
      \item  the system $\fsys_{LW}$ is one point dense (a strong $M$-basis) if and only if the sequence
        \begin{equation}
          \mu_n = \frac{a_{n-1} a_{n-3} \dots}{a_{n} a_{n-2} \dots }
        \end{equation}
        does not belong to $\ell^2$.
      \item the system $\fsys_{LW}$ is $k$ point dense ($k > 1$) if and only if the sequence $\{1/a_n\}_{n=1}^\infty$ does not belong to $\ell^1$.
    \end{enumerate}
  \end{theorem}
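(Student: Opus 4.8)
The plan is to run everything through Propositions~\ref{prop:reformulation} and~\ref{prop:kreformulation}, after first observing that for $\fsys_{LW}$ the defect $\Xi_n$ collapses. Since $f_{2j}=e_{2j}$ and $f^*_{2j+1}=e_{2j+1}$ while $f_{2j+1},f^*_{2j}$ are tridiagonal, a short computation (the summand $\inner{Tf_m}{f^*_m}-\inner{Te_m}{e_m}$ telescopes) gives, for $T=\sum_{t,l}\inner{u_t}{v_l}\,e_t\otimes e_l$,
\[
  \Xi_{2N}=a_{2N+1}\inner{u_{2N}}{v_{2N+1}},\qquad \Xi_{2N+1}=a_{2N+2}\inner{u_{2N+2}}{v_{2N+1}} .
\]
Hence \eqref{eq:thm} amounts to $a_{2N+1}\inner{u_{2N}}{v_{2N+1}}=-S_{2N}$ and $a_{2N+2}\inner{u_{2N+2}}{v_{2N+1}}=-S_{2N+1}$, where $S_n=\sum_{m=0}^{n}\inner{u_m}{v_m}$ and $Tr\,T=\lim_n S_n$; moreover \eqref{eq:thm} for all $n$ is equivalent to $\inner{Tf_n}{f^*_n}=0$ for all $n$. (Biorthogonality of $\fsys_{LW}$ and $\fsys_{LW}^*$ is a direct check, and both are complete because $\cspan{\{f_n\}}$ and $\cspan{\{f^*_n\}}$ each contain every $e_n$; so $\fsys_{LW}$ is an $M$-basis.)

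The two ``if'' halves are proved by exhibiting operators. If $\{1/a_n\}\in\ell^1$, build a two-dimensional $T$: put $v_{2N}=0$ and take $v_{2N+1}\in\mathbb R^2$ with $|v_{2N+1}|=\beta_N:=\bigl(|a_{2N+1}|^{-1}+|a_{2N+2}|^{-1}\bigr)^{1/2}$ chosen so that $v_{2N-1}\perp v_{2N+1}$ for every $N$ (e.g.\ alternating between two orthogonal directions); put $u_0=0$, $u_{2N+1}=0$ for $N\geq1$, $u_1$ with $\inner{u_1}{v_1}=1$, and for $M\geq1$ let $u_{2M}$ be the unique vector with $\inner{u_{2M}}{v_{2M-1}}=-1/a_{2M}$ and $\inner{u_{2M}}{v_{2M+1}}=-1/a_{2M+1}$. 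Then $S_n\equiv1$ for $n\geq1$, \eqref{eq:thm} holds for every $n$, one has $\{u_n\},\{v_n\}\in\ell^2(\mathbb R^2)$ (since $\beta_N^2$, $|a_{2N+1}|^{-1}$, $|a_{2N+2}|^{-1}$ are summable and $|u_{2M}|^2\leq|a_{2M}|^{-1}+|a_{2M+1}|^{-1}$), and $Tr\,T=\inner{u_1}{v_1}=1$; by Proposition~\ref{prop:kreformulation} this kills $2$ point density, hence $k$ point density for every $k\geq2$. If $\mu_n\in\ell^2$, build a rank one $T$ with $u_0=1$, $u_{2N+1}=0$, $u_{2N+2}=a_{2N+1}u_{2N}/a_{2N+2}$ and $v_1=1$, $v_{2N}=0$ $(N\geq1)$, $v_0=-a_1$, $v_{2N+1}=a_{2N}v_{2N-1}/a_{2N+1}$; then $u_{2N}=\mu_{2N}$ and $v_{2N+1}=a_1\mu_{2N+1}$, all the conditions $\inner{Tf_n}{f^*_n}=0$ hold by construction, $\{u_n\},\{v_n\}\in\ell^2$, and $Tr\,T=u_0v_0=-a_1\neq0$, killing one point density.

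For the ``only if'' of part~(2) (and, with it, one point density whenever $\{1/a_n\}\notin\ell^1$): if a finite-dimensional $T$ satisfies \eqref{eq:thm} with $Tr\,T=\sigma\neq0$, then $|\inner{u_{2N}}{v_{2N+1}}|=|S_{2N}|/|a_{2N+1}|\geq\tfrac{|\sigma|}{2|a_{2N+1}|}$ for large $N$, while Cauchy--Schwarz gives $\sum_N|\inner{u_{2N}}{v_{2N+1}}|\leq\bigl(\sum|u_{2N}|^2\bigr)^{1/2}\bigl(\sum|v_{2N+1}|^2\bigr)^{1/2}<\infty$; together with the analogous estimate from $S_{2N+1}$ this forces $\sum_n 1/|a_n|<\infty$, a contradiction. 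This argument is valid for every $k$, so $\{1/a_n\}\notin\ell^1$ also yields one point density.

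The remaining and hardest part is the ``only if'' of part~(1): a rank one $T$ with $Tr\,T=\sigma\neq0$ and $\inner{Tf_n}{f^*_n}=0$ must have $\mu_n\in\ell^2$. From $S_{2N}=-a_{2N+1}u_{2N}v_{2N+1}\to\sigma$ one gets $u_{2N}\neq0$, $v_{2N+1}\neq0$ for $N\geq N_0$, hence the recursions $a_{2N+1}v_{2N+1}=a_{2N}v_{2N-1}-v_{2N}$ and $a_{2N+2}u_{2N+2}=a_{2N+1}u_{2N}-u_{2N+1}$. Writing $\delta_N=u_{2N}/\mu_{2N}$, $\gamma_N=v_{2N+1}/\mu_{2N+1}$, $p_N=\mu_{2N}^2$, $q_N=\mu_{2N+1}^2$, and using $a_{n+1}\mu_{n+1}=a_n\mu_{n-1}$ together with the constancy of $a_{2N+1}\mu_{2N}\mu_{2N+1}=a_{2N+2}\mu_{2N+1}\mu_{2N+2}=\kappa$, the recursions turn into $v_{2N}=(\gamma_{N-1}-\gamma_N)\kappa/\mu_{2N}$, $u_{2N+1}=(\delta_N-\delta_{N+1})\kappa/\mu_{2N+1}$, $S_{2N}=-\kappa\,\delta_N\gamma_N$. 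Consequently $\{u_n\},\{v_n\}\in\ell^2$ and $S_{2N}\to\sigma$ give
\[
  \sum\gamma_N^2 q_N<\infty,\qquad \sum\delta_N^2 p_N<\infty,\qquad \delta_N\gamma_N\to\lambda\neq0,
\]
\[
  \sum\frac{(\gamma_{N-1}-\gamma_N)^2}{p_N}<\infty,\qquad \sum\frac{(\delta_N-\delta_{N+1})^2}{q_N}<\infty .
\]
From here the goal is $\sum p_N<\infty$ (then, symmetrically, $\sum q_N<\infty$), which gives $\mu_n\in\ell^2$: if $\sum p_N=\infty$ with partial sums $P_J\to\infty$, then Cauchy--Schwarz on the third series yields $|\gamma_J|^2\lesssim P_J$, so $\delta_J^2\gtrsim P_J^{-1}$ by the third relation, so $\sum\delta_N^2 p_N\gtrsim\sum p_N/P_N=\infty$ (the standard divergence of $\sum p_N/P_N$), contradicting the second series. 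Isolating these five conditions cleanly and running this divergence argument together with its symmetric counterpart is where the real work lies; everything else is bookkeeping.
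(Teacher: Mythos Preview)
Your proof is correct and complete; it takes a genuinely different route from the paper's. The paper first establishes a uniform reformulation (its Proposition~\ref{prop:reformulation-lw}): $\fsys_{LW}$ fails $k$-point density iff there exist $r_n\in\ell^2(\mathbb{R}^k)$ with $a_{n+1}\langle r_n,r_{n+1}\rangle=1$; both parts then fall out at once (for $k=1$ the collinearity forces $|r_n|$ proportional to $|\mu_n|$, and for $k\geq2$ one uses the angular freedom together with $R_n=\max(|a_n|^{-1/2},|a_{n+1}|^{-1/2})$). The non-trivial work in the paper sits in the converse half of that reformulation, handled by writing $W_n=|w_n|$, $a'_n=a_n\cos\theta_n$, and using $\Xi_n/\Xi_{n-1}=1+\eta_n$ with $\eta\in\ell^1$ to replace $W_n$ by the telescoped $W^\#_n$. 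You bypass this lemma entirely: your ``if'' halves are direct explicit constructions (alternating orthogonal $v_{2N+1}$'s in $\mathbb{R}^2$; the $\mu_n$-scalars for $k=1$), your ``only if'' for (2) is the immediate Cauchy--Schwarz bound, and for the hard ``only if'' of (1) you change variables to $\delta_N=u_{2N}/\mu_{2N}$, $\gamma_N=v_{2N+1}/\mu_{2N+1}$ and finish with the Abel--Dini divergence $\sum p_N/P_N=\infty$ (your $\kappa$ is in fact identically $1$, but this is harmless). The paper's approach buys a single lemma that treats all $k$ uniformly; yours buys a more elementary, self-contained argument for each case, and the weighted-$\ell^2$ divergence step is arguably more transparent than the paper's multiplicative manipulation of the $\Xi_n$.
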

  \begin{proof}
    Due to Proposition~\ref{prop:kreformulation} we know that $k$ point density of the system $\fsys$ is equivalent to the existence of $k$-dimensional
      vectors $u_n$, $v_n$ such that~\eqref{eq:thm} holds for the corresponding operator $T$.
    For the given $M$-basis $\fsys = \fsys_{LW}$ we can calculate $\Xi_n$ precisely:
    \begin{align*}
      \Xi_{2n-1} &= a_{2n}T_{2n - 1, 2n},\\
      \Xi_{2n} &= a_{2n + 1}T_{2n + 1, 2n},
    \end{align*}
      where $T_{ij} = \inner{Te_j}{e_i}$.

    Since $T_{ij} = \inner{u_j}{v_i}$, we have
    \begin{align*}
      \Xi_{2n-1} &= a_{2n}   \inner{u_{2n}}{v_{2n-1}},\\
      \Xi_{2n}   &= a_{2n+1} \inner{u_{2n}}{v_{2n+1}},
    \end{align*}
      where $\langle\cdot, \cdot\rangle$ denotes the scalar product in $\mathbb{R}^k$.

    For the convenience of the reader we will introduce the sequences of vectors $w_n$ and $w^*_n$.
    \begin{align*}
      w_{2n} &= u_{2n}, \quad w^*_{2n} = v_{2n},\\
      w_{2n+1} &= v_{2n+1}, \quad w^*_{2n+1} = u_{2n+1}.
    \end{align*}
    In view of this notation $\Xi_n = a_{n+1} \inner{w_n}{w_{n+1}}$ and due to Equation~\eqref{eq:propstar}
      $Tr\, T = \sum_{m=0}^\infty \inner{w_m}{w^*_m}$.

    Thus, we get that $\fsys$ is not $k$ point dense if and only if there exist
      $k$-dimensional vectors $\{w_n\}_{n=0}^\infty$, $\{w^*_n\}_{n=0}^\infty$ lying in $\ell^2(\mathbb{R}^k)$ such that
    \begin{equation}
      \label{eq:vector2}
      a_{n+1} \inner{w_n}{w_{n+1}} = -\sum_{m=0}^n \inner{w_m}{w^*_m},
    \end{equation}
      for any $n \geq 0$, and $\sum_{m=0}^\infty \inner{w_m}{w^*_m} \neq 0$.

    \medskip
    In what follows we show that the latter can be simplified even more.
    \begin{prop}
      \label{prop:reformulation-lw}
      The system $\fsys$ is not $k$ point dense if and only if there exists a sequence of vectors
        $\{r_n\}_{n=0}^\infty$ in $\ell^2(\mathbb{R}^k)$ such that
      \begin{equation}
        \label{eq:vector3}
        a_{n+1} \inner{r_n}{r_{n+1}} = 1,
      \end{equation}
        for any $n \geq 0$.
    \end{prop}
    \begin{proof}
      Suppose we found such $r_n$.
      Then we solve~\eqref{eq:vector2} by putting $w^*_n$ to zero, $w_n$ to $r_n$ for any $n > 0$ and
        choosing the vector $w^*_0$ so that $\inner{w_0}{w^*_0} = -1$.

      Now we prove the converse.
      Suppose we found such $w_n$ that~\eqref{eq:vector2} holds.
      Given that the vectors $w_n$ lie in $\mathbb{R}^k$, we rewrite the scalar product as
        the product of the vector lengths and the cosine of the angle between the vectors.
      Namely, we define $W_n = \lvert w_n\rvert$ and real $\theta_n$ that
        $\inner{w_{n}}{w_{n+1}} = W_n W_{n+1} \cos{\theta_n}.$

      The sequence $\Xi_n = -\sum_0^n \inner{w_m}{w^*_m}$ has a non-zero limit, so let us
        find the largest $N > 0$ such that $\Xi_N = 0$.
      Then we can modify the original sequence by setting $w_n$, $w^*_n$ to zero for any $0 \leq n \leq N$ so that~\eqref{eq:vector2}
        still holds.
      Therefore, without loss of generality we can assume that $\Xi_n \neq 0$ for any $n \geq 0$.
      Setting $a'_n = a_n \cos{\theta_n}$ we see that the sequence
      \[
        W_n = \frac{\Xi_{n-1}/a'_n}{\Xi_{n-2}/a'_{n-1}} \cdot \frac{\Xi_{n-3}/a'_{n-2}}{\Xi_{n-4}/a'_{n-3}} \cdots
      \]
        belongs to $\ell^2$.
      Now since $\Xi_n = -\sum_0^n \inner{w_m}{w^*_m}$, we discover that
      \[
        \frac{\Xi_n}{\Xi_{n-1}} = 1 + \eta_n,
      \]
        where $\{\eta_n\}_{n=1}^\infty \in \ell^1$.
      Thus the product of such $(1 + \eta_m)$ fractions is bounded by some constant above.
      It follows that the sequence
      \[
        W^\#_n = \frac{1/a'_n}{1/a'_{n-1}} \cdot \frac{1/a'_{n-2}}{1/a'_{n-3}} \cdots
      \]
        belongs to $\ell^2$.
      Now we set $r_n$ to $\frac{W^\#_n}{W_n}w_n$, and then~\eqref{eq:vector2} holds since
      \[
        a_{n+1} \inner{r_n}{r_{n+1}} = a_{n+1} \frac{1/a'_{n+1}}{\Xi_n/a'_{n+1}} \inner{w_n}{w_{n+1}} = 1.
      \]
      Since $\lvert r_n \rvert = \lvert W^\#_n \rvert$ and the sequence $\left\{\lvert W^\#_n \rvert\right\}_{n=1}^\infty$ belongs to $\ell^2$,
        the sequence $\{r_n\}_{n=1}^\infty$ belongs to $\ell^2(\mathbb{R}^k)$ as well.
    \end{proof}

    Now we are ready to prove the theorem for the case $k=1$.
    \begin{prop}
      The system $\fsys_{LW}$ is one point dense if and only if $\seqone{\mu}$ does not belong to $\ell^2$.
    \end{prop}
    \begin{proof}
      It follows from Proposition~\ref{prop:reformulation-lw}.

      The case $k=1$ has all the vectors $r_n$, $r^*_n$ lying on the same line ($\mathbb{R}^1$).
      Since all $r_n$ are collinear, the lengths of the vectors $r_n$ are precisely $\mu_n$.
      Hence, Equation~\eqref{eq:vector3} can be satisfied if and only if $\seqone{\mu}$ is square summable.
    \end{proof}

    After this we consider the case $k > 1$.
    \begin{prop}
      The system $\fsys_{LW}$ is $k$ point dense \textup($k > 1$\textup) if and only if the sequence $\{1/a_n\}_{n=1}^\infty$
        does not belong to $\ell^1$.
    \end{prop}
    \begin{proof}
      According to Proposition~\ref{prop:reformulation-lw}, the system $\fsys_{LW}$ is $k$ point dense
        if and only if there is no such sequence $\seq{r}$ in $\ell^2(\mathbb{R}^k)$ which satisfy $a_n \inner{r_n}{r_{n-1}} = 1$.
      Obviously, if there are such vectors $r_n$, then $\{1/a_n\}_{n=1}^\infty$ belongs to $\ell^1$.

      Conversely, suppose $\{1/a_n\}_{n=1}^\infty$ belongs to $\ell^1$.
      Then the sequence $R_n = \max(\lvert a_n \rvert^{-\frac{1}{2}}, \lvert a_{n+1} \rvert^{-\frac{1}{2}})$ is square summable.
      Observe that $R_nR_{n-1} \geq 1/\lvert a_n\rvert$, and so it is always possible to choose the angle $\theta_n$ so that
      \[
        a_n \langle r_n, r_{n-1} \rangle = a_n R_n R_{n-1}\cos{\theta_n} = 1.
      \]
      Now we have defined the lengths for $r_n$ and the angles between each two consecutive vectors $r_{n-1}$, $r_n$.
      Obviously, for any $k \geq 2$ we are able to lay out the vectors $r_n$ in $\mathbb{R}^k$.
    \end{proof}
    The last two propositions prove Theorem~\ref{thm:katavolos}.
  \end{proof}

\section{Pentadiagonal example}
  \label{section:pentadiagonal}
  In this section we explore another vector system $\fsys$ and its biorthogonal system $\fstarsys$ defined as follows:
  \begin{equation*}
    \label{eq:5system}
    \begin{aligned}
      &\mathbf{f_{4j}} = e_{4j}, \quad
      \mathbf{f^*_{4j}} = e_{4j} + d_{2j - 1} e_{4j-2} - b_{2j-1} e_{4j-1} + a_{2j} e_{4j+1} + c_{2j} e_{4j+2}\\
      &\mathbf{f_{4j+1}} = -a_{2j} e_{4j} + e_{4j+1}, \quad
      \mathbf{f^*_{4j+1}} = e_{4j+1} + b_{2j} e_{4j+2},\\
      &\mathbf{f_{4j+2}} = e_{4j+2} + d_{2j} e_{4j} - b_{2j} e_{4j+1} + a_{2j+1} e_{4j+3} + c_{2j+1} e_{4j+4},\quad
      \mathbf{f^*_{4j+2}} = e_{4j+2},\\
      &\mathbf{f_{4j+3}} = e_{4j+3} + b_{2j+1} e_{4j+4},\quad
      \mathbf{f^*_{4j+3}} = -a_{2j+1} e_{4j+2} + e_{4j+3},
    \end{aligned}
  \end{equation*}
    where the real coefficients $a_n$, $b_n$, $c_n$, $d_n$ are equal to zero whenever $n < 0$, and satisfy the equality
      $c_n + d_n = a_n b_n$ for any $n \geq 0$.
  \begin{prop}
    The given system is an $M$-basis.
  \end{prop}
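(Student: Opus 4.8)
The plan is to check, in order: (i) that $\fstarsys$ is biorthogonal to $\fsys$; (ii) that $\fsys$ is complete, so that, possessing a biorthogonal sequence, it is automatically minimal; and (iii) that $\fstarsys$ is complete. Together these say exactly that $\fsys$ is an $M$-basis, with $\fstarsys$ as its (necessarily unique) biorthogonal sequence. It is also immediate from the explicit formulas that both systems are band-diagonal, with $L=2$.

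For (i) I would verify $\inner{f_t}{f^*_l}=\delta_{tl}$ by direct computation. The supports are short: $f_{4j+2}$ is supported on $\{4j,\dots,4j+4\}$ and $f^*_{4j}$ on $\{4j-2,\dots,4j+2\}$, and every other $f_t$ or $f^*_l$ lives on at most two consecutive basis vectors, so $\inner{f_t}{f^*_l}$ vanishes automatically except for a small number of nearby index pairs, which one disposes of by running through residues modulo $4$. The diagonal entries equal $1$ because $e_t$ has coefficient $1$ in both $f_t$ and $f^*_t$. Almost all off-diagonal entries cancel in the trivial two-term pattern $(-\alpha)\cdot 1+1\cdot\alpha=0$; the hypothesis $c_n+d_n=a_nb_n$ is used only for the two genuinely three-term products $\inner{f_{4j+2}}{f^*_{4j}}=d_{2j}-a_{2j}b_{2j}+c_{2j}$ and $\inner{f_{4j+2}}{f^*_{4j+4}}=c_{2j+1}-a_{2j+1}b_{2j+1}+d_{2j+1}$, which vanish precisely by that relation (even index in the first, odd index in the second). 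The low-index cases ($j=0$, negative subscripts) require no separate treatment since coefficients with negative index are zero.

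For (ii) I would show that every $e_m$ lies in the algebraic span of $\fsys$, whence $\mathrm{span}\,\fsys=\mathrm{span}\{e_n\}$, which is dense. This is a triangular back-substitution: $e_{4j}=f_{4j}$; then $e_{4j+1}=f_{4j+1}+a_{2j}f_{4j}$; then $e_{4j+3}=f_{4j+3}-b_{2j+1}f_{4j+4}$; and finally $e_{4j+2}$ is read off from the relation defining $f_{4j+2}$, all of $e_{4j},e_{4j+1},e_{4j+3},e_{4j+4}$ being already available. Step (iii) is the mirror image, carried out from the other end: $e_{4j+2}=f^*_{4j+2}$; then $e_{4j+3}=f^*_{4j+3}+a_{2j+1}f^*_{4j+2}$ and $e_{4j+1}=f^*_{4j+1}-b_{2j}f^*_{4j+2}$; and then, with $e_{4j-2}=f^*_{4j-2}$ and $e_{4j-1}=f^*_{4j-1}+a_{2j-1}f^*_{4j-2}$ in hand, the relation defining $f^*_{4j}$ is solved for $e_{4j}$. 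So $\mathrm{span}\,\fstarsys=\mathrm{span}\{e_n\}$ is dense as well.

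Combining: $\fsys$ has a biorthogonal sequence, hence is minimal; being complete, it has a unique biorthogonal sequence, which is therefore $\fstarsys$; and since $\fstarsys$ is complete, $\fsys$ is an $M$-basis. I do not expect a genuine obstacle. The only point requiring care is the bookkeeping in (i) — keeping track of which coefficients of the two ``wide'' vectors $f^*_{4j}$ and $f_{4j+2}$ actually overlap — together with the observation that $c_n+d_n=a_nb_n$ is exactly what cancels the two three-term inner products; everything else is routine.
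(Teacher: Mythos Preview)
Your proposal is correct and follows exactly the approach the paper sketches: the paper's proof is a single sentence stating that the relation $c_n+d_n=a_nb_n$ yields biorthogonality and that completeness of $\fsys$ and $\fstarsys$ is easy to check, and your plan simply fills in those routine details (the back-substitution showing $e_m\in\mathrm{span}\,\fsys$ and $e_m\in\mathrm{span}\,\fstarsys$, and the identification of the two three-term inner products where the hypothesis is actually used). There is nothing to add.
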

  \begin{proof}
    The equality $c_n + d_n = a_n b_n$ guarantees the bi\-orthogonality,
      while the completeness of $\fsys$ and $\fstarsys$ is easy to check.
  \end{proof}

  We prove a theorem similar to Theorem~\ref{thm:katavolos}, though we do not investigate the case $k = 1$ in this section.
    \begin{theorem}
    \label{thm:5diag}
      The following statements are equivalent:
      \begin{enumerate}
        \item the given system is rank one dense,
        \item the given system is $k$ point dense for some (equivalently any) $k > 1$,
        \item the sequence
          \[
            \mu_n = \min\left(\frac{1}{\lvert a_n \rvert} + \frac{1}{\lvert b_n \rvert}, \frac{1 + \lvert b_n\rvert}{\lvert d_n\rvert},
                    \frac{1 + \lvert a_n\rvert}{\lvert c_n\rvert}\right)
          \]
        does not belong to $\ell^1$.
      \end{enumerate}
    \end{theorem}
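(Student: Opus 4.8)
The plan is to run the scheme of Theorem~\ref{thm:katavolos}: put $\Xi_n$ in closed form, translate statement~(2) into an arrangement problem for vectors of $\mathbb{R}^k$ through Proposition~\ref{prop:kreformulation}, and then trap the $\ell^2$-cost of such an arrangement between constant multiples of $\mu_n$.

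\emph{Step 1 (the shape of $\Xi_n$).} Expanding $\inner{Tf_m}{f^*_m}-\inner{Te_m}{e_m}$ cancels the diagonal entry of $T$ and leaves a few off-diagonal ones; summing over a block $\{4j,4j+1,4j+2,4j+3\}$ and using $c_n+d_n=a_nb_n$ makes the partial sums telescope, and with $\inner{Te_j}{e_i}=\inner{u_j}{v_i}$ (as in the proof of Theorem~\ref{thm:katavolos}) one gets
\begin{align*}
\Xi_{4J}&=a_{2J}\inner{u_{4J}}{v_{4J+1}}+c_{2J}\inner{u_{4J}}{v_{4J+2}}, & \Xi_{4J+1}&=b_{2J}\inner{u_{4J+1}}{v_{4J+2}}-d_{2J}\inner{u_{4J}}{v_{4J+2}},\\
\Xi_{4J+2}&=a_{2J+1}\inner{u_{4J+3}}{v_{4J+2}}+c_{2J+1}\inner{u_{4J+4}}{v_{4J+2}}, & \Xi_{4J+3}&=b_{2J+1}\inner{u_{4J+4}}{v_{4J+3}}-d_{2J+1}\inner{u_{4J+4}}{v_{4J+2}}.
\end{align*}
By Proposition~\ref{prop:kreformulation}, if statement~(2) fails then there are $\{u_n\},\{v_n\}\in\ell^2(\mathbb{R}^2)$ with $\Xi_n=\sigma_n$ for all $n$, where $\sigma_n:=-\sum_{m=0}^n\inner{u_m}{v_m}$, and $\sigma_n\to L\neq0$; if statement~(1) fails the same holds with $\ell^2(\mathbb{R}^2)$ replaced by $\ell^2$ of a separable Hilbert space. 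Two features will be exploited: the vectors $v_{4J}$ ($J\geq1$) and $u_{4J+2}$ occur in no $\Xi_n$, so they are free and will be used to pin down $\sum\inner{u_m}{v_m}$; and the equations split into a chain of ``half-blocks'', the even one $H_{2J}$ coupling the crossover $g_J:=u_{4J}$ and the hub $h_J:=v_{4J+2}$ through the local auxiliaries $v_{4J+1},u_{4J+1}$, the odd one $H_{2J+1}$ coupling $h_J$ and $g_{J+1}$ through $u_{4J+3},v_{4J+3}$, with auxiliaries of distinct half-blocks constraining nothing else.

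\emph{Step 2 (if $(1)$ or $(2)$ fails, then $\mu\in\ell^1$).} Given such $\{u_n\},\{v_n\}$, put $\beta:=|L|/2$ and fix $m_0$ with $|\sigma_n|\geq\beta$ for $n\geq m_0$. For $H_{2J}$ with $4J\geq m_0$, write $z:=\inner{g_J}{h_J}$; from the two equations, Cauchy--Schwarz and $2xy\leq x^2+y^2$,
\[ |g_J|^2+|v_{4J+1}|^2\geq\tfrac{2|\sigma_{4J}-c_{2J}z|}{|a_{2J}|},\qquad |u_{4J+1}|^2+|h_J|^2\geq\tfrac{2|\sigma_{4J+1}+d_{2J}z|}{|b_{2J}|},\qquad |g_J|^2+|h_J|^2\geq2|z|, \]
so the half-block cost $\mathcal{C}_{2J}:=|g_J|^2+|h_J|^2+|v_{4J+1}|^2+|u_{4J+1}|^2$ dominates $\Psi_{2J}(z)$, where $\Psi_{2J}(t):=|t|+\frac{|\sigma_{4J}-c_{2J}t|}{|a_{2J}|}+\frac{|\sigma_{4J+1}+d_{2J}t|}{|b_{2J}|}$ (and similarly for odd half-blocks). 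Now $\Psi_{2J}$ is convex and piecewise linear, so its minimum is attained at a breakpoint $t\in\{0,\,\sigma_{4J}/c_{2J},\,-\sigma_{4J+1}/d_{2J}\}$; evaluating there, using $|\sigma_{4J}|,|\sigma_{4J+1}|\geq\beta$, the fact that $\sigma_{4J+1}-\sigma_{4J}=-\inner{u_{4J+1}}{v_{4J+1}}$ is bounded by $\tfrac12\mathcal{C}_{2J}$, and the identity $c_n+d_n=a_nb_n$ — which makes the three breakpoint values comparable to $\frac1{|a_{2J}|}+\frac1{|b_{2J}|}$, $\frac{1+|a_{2J}|}{|c_{2J}|}$ and $\frac{1+|b_{2J}|}{|d_{2J}|}$ — one concludes $\mathcal{C}_{2J}\geq c_0\beta\,\mu_{2J}$ for an absolute constant $c_0>0$ once $\mathcal{C}_{2J}$ is small, which it eventually is. Since each $u_n$ or $v_n$ belongs to at most two half-blocks, summing yields $\sum_m\mu_m\leq C_L\sum_n(|u_n|^2+|v_n|^2)<\infty$.

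\emph{Step 3 (if $\mu\in\ell^1$, a bad rank-$2$ operator exists).} Assume $\mu\in\ell^1$. By Proposition~\ref{prop:kreformulation} it is enough to construct $\{u_n\},\{v_n\}\in\ell^2(\mathbb{R}^2)$ with $\Xi_n=1$ for all $n$ and $\sum_{m=0}^n\inner{u_m}{v_m}=-1$ for all $n$ — then $T=\sum_{t,l}\inner{u_t}{v_l}\,e_t\otimes e_l$ has rank $\leq2$, annihilates $R_1(\cal{A})$, and $Tr\,T=-1\neq0$. Arrange the crossover/hub vectors into a chain $w_0=g_0,w_1=h_0,w_2=g_1,\dots$, so that $H_m$ involves $w_m,w_{m+1}$, and set $R_m^2:=\mu_{m-1}+\mu_m$ (with $\mu_{-1}:=0$); then $\sum_m R_m^2<\infty$ and $R_m^2,R_{m+1}^2\geq\mu_m$. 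For each $H_m$ choose a strategy according to which of $\frac1{|a_m|}+\frac1{|b_m|}$, $\frac{1+|b_m|}{|d_m|}$, $\frac{1+|a_m|}{|c_m|}$ realizes $\mu_m$: let $\inner{w_m}{w_{m+1}}$ equal $0$, $-1/d_m$, or $1/c_m$ respectively. In the last two cases $c_n+d_n=a_nb_n$ lets one of the two auxiliaries of $H_m$ be $0$ while the other is a scalar multiple of $w_m$ or $w_{m+1}$; in all cases the two auxiliaries can be taken orthogonal, the relations $\Xi_n=1$ hold, each auxiliary has squared norm at most $\mu_m$ (using $R_m^2,R_{m+1}^2\geq\mu_m$), and the Cauchy--Schwarz compatibility $R_mR_{m+1}\geq|\inner{w_m}{w_{m+1}}|$ holds because $\frac1{|c_m|},\frac1{|d_m|}\leq\mu_m$ in the relevant cases. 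Lay the chain out in $\mathbb{R}^2$ greedily — having placed $w_m\neq0$, a vector $w_{m+1}$ of length $R_{m+1}$ with the prescribed inner product against $w_m$ exists — and attach each $H_m$'s auxiliaries, which depend only on $w_m,w_{m+1}$ and restrict no other half-block. Finally set $v_0:=-u_0/|u_0|^2$, $v_{4J}:=0$ ($J\geq1$), $u_{4J+2}:=0$; since all auxiliaries were made orthogonal inside their half-block, $\inner{u_m}{v_m}=0$ for $m\geq1$, hence $\sum_{m=0}^n\inner{u_m}{v_m}=-1$ for every $n$, as required.

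\emph{Step 4 (assembly).} A rank one dense algebra is $k$ point dense for all $k$, so $(1)\Rightarrow(2)$; Step~2 applied to trace class operators gives $(3)\Rightarrow(1)$; and the operator built in Step~3 has rank $\leq2$, so it is a $k$-dimensional operator for every $k\geq2$ and hence destroys $k$ point density for every such $k$, i.e. $\neg(3)\Rightarrow\neg(2)$. Therefore $(1),(2),(3)$ are equivalent, and because $(3)$ is independent of $k$, ``$k$ point dense for some $k>1$'' and ``for any $k>1$'' coincide. The step I expect to be the main obstacle is the breakpoint estimate of Step~2: making $\min_t\Psi_{2J}(t)\geq c_0\beta\,\mu_{2J}$ uniform forces one to treat the degenerate configurations in which some coefficient among $a_m,b_m,c_m,d_m$, or the product $c_m+d_m=a_mb_m$, is close to zero — so that a breakpoint of $\Psi_{2J}$ runs off to infinity, or the numerator $c_m\sigma_{4J+1}+d_m\sigma_{4J}$ nearly cancels — where instead one argues directly that the small coefficient forces one of $g_J,h_J$ or a surviving auxiliary to have norm comparable to the surviving term of $\mu_m$. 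A minor nuisance is the $\ell^2$-bookkeeping in Step~3, where the lengths $R_m$ must serve two neighbouring half-blocks at once.
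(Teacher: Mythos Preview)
Your outline is correct and follows essentially the same route as the paper: the same closed form for $\Xi_n$, the same piecewise-linear minimisation (your $\Psi_{2J}$ is the paper's $g_n$) with the same three breakpoints to extract $\mu_n$ from the failure of density, and the same three-strategy greedy construction in $\mathbb{R}^2$ when $\{\mu_n\}\in\ell^1$. The only cosmetic difference is that for the trace-class direction the paper works directly with matrix entries $\nu_n=|T_{nn}|+|T_{n,n+1}|+|T_{n,n+2}|\in\ell^1$ rather than with squared vector norms via Cauchy--Schwarz, which spares the SVD bookkeeping; and the breakpoint estimate you flag as the obstacle is handled in the paper exactly by the degenerate-coefficient case split you anticipate (the sets $N_1,N_2,N_3$ according to whether $|b_n|\gtrless 1/2$).
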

    \begin{proof}
      In order to investigate the density properties we repeat the reasoning from Section~\ref{sec:preliminaries}.
      Presume that $\Xi_n$ are defined by~\eqref{eq:xi}.

      Thus, for any $j \geq 0$ we have
      \begin{equation}
        \label{eq:xi5}
        \begin{aligned}
          \Xi_{4j} &= a_{2j} T_{4j+1, 4j} + c_{2j} T_{4j+2, 4j},\\
          \Xi_{4j + 1} &= -d_{2j} T_{4j+2, 4j} + b_{2j} T_{4j+2, 4j+1},\\
          \Xi_{4j + 2} &= a_{2j+1} T_{4j+2, 4j+3} + c_{2j+1} T_{4j+2, 4j+4},\\
          \Xi_{4j + 3} &= -d_{2j+1} T_{4j+2, 4j+4} + b_{2j+1} T_{4j+3, 4j+4},
        \end{aligned}
      \end{equation}
        where $T_{ij}$ stands for $\langle Te_j, e_i \rangle$.

      First of all we investigate the conditions of rank one density property for $\fsys$.
      \begin{prop}
        \label{prop:inf-dim}
        The following statements are equivalent:
        \begin{enumerate}
          \item the system $\fsys$ is not rank one dense,
          \item there exists an operator $T$ such that $Tr\,T = -1$ and for any $n \geq 0$ one has \[\Xi_n + \sum_{m=0}^n \inner{Te_m}{e_m} = 0,\]
          \item the sequence $\left\{\mu_n\right\}_{n=1}^\infty$ belongs to $\ell^1$.
        \end{enumerate}
      \end{prop}
      \begin{proof}
        The equivalence of the first two statements is due to Proposition~\ref{prop:reformulation}.
        We are going to prove the equivalence between the last two statements.

        Assume that $\left\{\mu_n\right\}_{n=1}^\infty \in \ell^1$; our purpose is to construct the required operator $T$.
        Let $T_{00}$ be equal to $-1$, and $T_{jj}$ be equal to zero for any $j > 0$.
        Next we consider three cases for each $n \geq 0$.

        \noindent\textbf{Case 1.}
        Suppose $\mu_n = 1/\lvert a_n\rvert + 1/\lvert b_n \rvert$.
        For $n=2j$ we set
        \[
          T_{4j+1,4j}=1/a_n, \quad T_{4j+2,4j} = 0, \quad T_{4j+2,4j+1}=1/b_n.
        \]
        That guarantees the equality $\Xi_{2n} = \Xi_{2n+1} = 1$.
        For $n=2j+1$ we set
        \[
          T_{4j+2,4j+3}=1/a_n, \quad T_{4j+2,4j+4} = 0, \quad T_{4j+3,4j+4}=1/b_n,
        \]
        which provides the equality $\Xi_{2n} = \Xi_{2n+1} = 1$.

        \medskip
        \noindent\textbf{Case 2.}
        Assume $\mu_n = (1 + \lvert b_n\rvert)/\lvert d_n\rvert$.
        For $n=2j$ we set
        \[
          T_{4j+1,4j} = b_{2j}/d_{2j}, \quad T_{4j+2,4j} = -1/d_{2j}, \quad T_{4j+2,4j+1} = 0.
        \]
        Again, we have $\Xi_{2n} = \Xi_{2n+1} = 1$.
        For $n = 2j + 1$ we set
        \[
          T_{4j+2,4j+3}=b_{2j+1}/d_{2j+1},  \quad T_{4j+2,4j+4} = -1/d_{2j+1}, \quad T_{4j+3,4j+4}=0,
        \]

        The third case $\mu_n = (1 + \lvert a_n\rvert)/\lvert c_n\rvert$ is left to the reader.
        \medskip

        All the other entries $T_{ij}$ we set to zero.
        These equalities ensure that $\Xi_n = -\sum_{s=0}^n T_{ss} = 1$ for any $n \geq 0$.

        The constructed operator $T$ belongs to the trace class since the non-zero operator matrix entries are summable
          due to the assumption that $\left\{\mu_n\right\}_{n=1}^\infty \in \ell^1$.
        Since the trace of $T$ is equal to $-1$, the sufficiency is proved.

        \medskip
        Conversely, assume that there exists a trace class operator $T$ in the annihilator of $R_1(\cal{A})$ with the trace equal to $-1$.

        First of all, we prove that $\{\mu_{2j}\}_{j=1}^\infty$ is a summable sequence.
        Since $T$ is in the trace class, the sequence of vectors $\nu_n = \lvert T_{nn} \rvert + \lvert T_{n, n + 1} \rvert + \lvert T_{n, n+2} \rvert$
          belongs to $\ell^1$.
        Obviously, $\{\Xi_n\}_{n=1}^\infty$ belongs $\ell^1$ as well.
        As a consequence, we have $\lvert \Xi_n\rvert \geq 0.5$ for all $n$ large enough; we will assume that it holds for any $n > 0$.

        It can be easily checked that if for some $n$ one of the numbers $a_n,b_n,c_n,d_n$ is equal to zero, then $\nu_n \geq \mu_n/2$.
        From this point we will suppose that the coefficients are nonzero for any $n > 0$.

        For any even $n = 2j$ consider the linear function
        \begin{align*}
          g_{n}(x) = \Big\lvert \frac{\Xi_{2n} - c_{n} x}{a_{n}} \Big\rvert +
                       \lvert x \rvert +
                     \Big\lvert \frac{\Xi_{2n+1} + d_{n} x}{b_{n}} \Big\rvert.
        \end{align*}
        Obviously, we have $\nu_{n} = g_{n}(T_{2n+2, 2n})$.
        The function $g_n$ is piecewise linear, so its minimum is attained in the breakpoints.
        The breakpoints are zero, $y_n = \Xi_{2n}/c_n$ and $z_n = -\Xi_{2n+1}/d_n$.
        We have $g_n(0) \geq \mu_n/2$.
        Consider the set $N_1 \subseteq \mathbb{N}_{{even}}$, such that for any $n \in N_1$ the function $g_n$ attains its minimum in
          the point $\Xi_{2n}/c_n$.
        Thus, for any $n\in N_1$ we have $\nu_n \geq g_n(y_n)$.
        We have
        \[
          g_n(y_n) = \Big\lvert \Xi_{2n}/c_n \Big\rvert +
                   \Big\lvert \frac{\Xi_{2n+1} + d_{n} \Xi_{2n}/c_n}{b_{n}} \Big\rvert.
        \]
        Since $\nu_n$ is summable and $\nu_n \geq g_n(y_n) \geq 0.5/|c_n|$ for any $n \in N_1$
          we deduce that $\sum_{n \in N_1} |c_n|^{-1} < \infty$.

        Clearly, $G_n = g_n(y_n) - \Big\lvert \Xi_{2n}/c_n \Big\rvert$ is summable.
        Let $\Delta_n$ stand for the difference $\left(\Xi_{2n+1} - \Xi_{2n}\right)$.
        Then
        \[
          G_n = \Big\lvert \frac{c_n \Xi_{2n+1} + d_{n} \Xi_{2n}}{c_n b_n} \Big\rvert
              = \Big\lvert \frac{c_n \Delta_n + (c_n + d_n) \Xi_{2n}}{c_n b_n} \Big\rvert
              = \Big\lvert \frac{c_n \Delta_n + a_n b_n\Xi_{2n}}{c_n b_n} \Big\rvert.
        \]
        Hence, $\big\lvert G_n - \Xi_{2n}\lvert a_n/c_n\rvert \big\rvert \leq \lvert \Delta_n/b_n \rvert$.

        Consider the sets $N_2 = \{n\in N_1 \mid 0.5 \leq \lvert b_n \rvert\}$ and $N_3 = N_1 \setminus N_2$.
        Since $\Xi_n$ has a finite limit, we have $\sum_{n \in N_2} \lvert a_n/c_n \rvert < \infty$.
        Hence, $\{\mu_n\}_{n \in N_2} \in \ell^1$.

        Assume that $\sum_{n \in N_3} \lvert a_n/c_n\rvert = \infty$.

        We have $\big\lvert \lvert b_n \rvert G_n - \Xi_{2n}\lvert (c_n + d_n)/c_n \rvert\big\rvert \leq \lvert \Delta_n \rvert$.
        Since the sequences $\{b_n G_n\}_{n\in N_3}$ and $\{\Delta_n\}_{n \in N_3}$ are absolutely summable, the sequence
          $\{(c_n + d_n) / c_n\}_{n\in N_3} $ is absolutely summable as well.
        Consequently, $\lvert d_n/c_n \rvert \geq 0.5$ when $n$ is large enough.
        We get $\{1/c_n\}_{n \in N_3} \in \ell^1$, and thus $\{1/d_n\}_{n \in N_3} \in \ell^1$.
        Since for $n \in N_3$ one has $\lvert b_n \rvert \leq 0.5$, we have
        \[
          \sum_{n\in N_3} \mu_n \leq \sum_{n \in N_3} \frac{1 + \lvert b_n\rvert}{\lvert d_n \rvert} < \infty.
        \]

        Repeating the reasoning for odd $n$, we get that $\left\{\mu_n\right\}_{n=1}^\infty$ is a summable sequence.
      \end{proof}
      \bigskip
      Now consider the case of the $k$-dimensional operator $T = \sum_{s=1}^k y^s \otimes x^s$, where $x^s, y^s \in \cal{H}$.
      This time we define the vectors $\seq{v}$ and $\seq{u}$ in $\mathbb{R}^k$ as follows:
      \begin{align*}
        v_{2j} &= (y^1_{4j}, y^2_{4j}, \dots ,y^k_{4j}) \quad
        &v^*_{2j} = (x^1_{4j}, x^2_{4j}, \dots ,x^k_{4j}) \\
        v_{2j+1} &= (x^1_{4j+2}, x^2_{4j+2}, \dots ,x^k_{4j+2}) \quad
        &v^*_{2j+1} = (y^1_{4j+2}, y^2_{4j+2}, \dots ,y^k_{4j+2}) \\
        u_{2j} &= (x^1_{4j+1}, x^2_{4j+1}, \dots ,x^k_{4j+1}) \quad
        &u^*_{2j} = (y^1_{4j+1}, y^2_{4j+1}, \dots ,y^k_{4j+1}) \\
        u_{2j+1} &= (y^1_{4j+3}, y^2_{4j+3}, \dots ,y^k_{4j+3}) \quad
        &u^*_{2j+1} = (x^1_{4j+3}, x^2_{4j+3}, \dots ,x^k_{4j+3})
      \end{align*}
      Note that the sequences $\seq{v}$, $\seq{u}$ belong to $\ell^2(\mathbb{R}^k)$.

      Now we can rewrite the equations~\eqref{eq:xi5} using the introduced vectors:
      \begin{align*}
        \Xi_{4j} &= a_{2j} \langle u_{2j}, v_{2j}\rangle + c_{2j} \langle v_{2j+1}, v_{2j}\rangle,\\
        \Xi_{4j + 1} &= -d_{2j} \langle v_{2j+1}, v_{2j}\rangle + b_{2j} \langle v_{2j+1}, u_{2j}\rangle,\\
        \Xi_{4j + 2} &= a_{2j+1} \langle v_{2j+1}, u_{2j+1} \rangle + c_{2j+1} \langle v_{2j+1}, v_{2j+2} \rangle,\\
        \Xi_{4j + 3} &= -d_{2j+1} \langle v_{2j+1}, v_{2j+2}\rangle + b_{2j+1} \langle u_{2j+1}, v_{2j+2} \rangle,
      \end{align*}
        where $\langle\cdot, \cdot\rangle$ denotes the scalar product in $\mathbb{R}^k$.
      These equations simplify to
      \begin{equation}
        \label{eq:vector-eqs}
        \begin{aligned}
          \Xi_{2j} &= a_{j} \langle u_{j}, v_{j} \rangle  + c_{j} \langle v_{j+1}, v_{j} \rangle,\\
          \Xi_{2j + 1} &= -d_{j} \langle v_{j+1}, v_{j} \rangle + b_{j} \langle v_{j+1}, u_{j}\rangle.
        \end{aligned}
      \end{equation}

      Next we are going to analyze the necessary condition of $k$ point density property for $\fsys$.
      \begin{prop}
        \label{prop:2pd}
        If $\left\{\mu_n\right\}_{n=1}^\infty$ belongs to $\ell^1$ then it is possible to construct
          the vector sequences $\seq{u}, \seq{v} \in \ell^2(\mathbb{R}^2)$ such that for any $n \geq 0$ we have $\Xi_n = 1$.
      \end{prop}
      \begin{corol}
        \label{corol:2density}
        If $\fsys$ is $k$ point dense for any $k \geq 2$ then $\sum_{n=1}^\infty \lvert\mu_n\rvert = \infty$.
      \end{corol}
      \begin{proof}[Proof of the corollary]
        Assume the converse: $\{\mu_n\}_{n=1}^\infty \in \ell^1$.

        We apply the proposition and get the vectors $u_n$ and $v_n$.
        Now without loss of generality we can assume that $u_0 \neq 0$.
        Then consider $u^*_0$ so that $\inner{u_0}{u^*_0} = -1$ and set all $u^*_n$ ($n > 0$), $v^*_n$ to zero.
        Since the trace of the resulting operator $T$ is equal to $\sum_{n=0}^\infty \left(\inner{u_n}{u^*_n} + \inner{v_n}{v^*_n}\right) \neq 0$,
          Proposition~\ref{prop:kreformulation} implies that $\fsys$ is not two point dense.
        Trivially, when $\fsys$ is not two point dense, it is also not $k$ point dense for any $k \geq 2$.
      \end{proof}
      \begin{proof}[Proof of Proposition~\ref{prop:2pd}]
        First we are going to present the vector lengths $V_n = \lvert v_n \rvert$ for each $n \geq 0$.

        For this purpose we are going to define an auxiliary sequence $\{M_n\}_{n=1}^\infty \in \ell^2$ such that $V_n \geq M_n$ for any $n$.
        On each step $n$ we will define $V_n$ and $M_{n+1}$.

        We start by setting $V_0 = M_1 = 1$.

        For any $n > 0$ we have three choices for $\mu_n$:
        \begin{enumerate}
          \item if $\mu_n = 1/\lvert a_n \rvert+ 1/\lvert b_n \rvert$, we set 
              \[
                M_{n+1} = \frac{1}{\sqrt{\smash[b]{\lvert b_n \rvert}}}, \quad
                V_n = \max\left(M_n, \frac{1}{\sqrt{\smash[b]{\lvert a_n \rvert}}}\right),
              \]
          \item whenever $\mu_n = (1 + \lvert a_n \rvert)/\lvert c_n \rvert$, we set 
              \[
                M_{n+1} = \max\biggl(\smash[b]{\frac{\sqrt{|a_n|}}{\sqrt{\lvert c_n \rvert}}}, \frac{1}{\sqrt{\smash[b]{\lvert c_n \rvert}}}\biggr), \quad
                V_n = \max\left(M_n, \frac{2}{\sqrt{\smash[b]{|c_n|}}}\right),
              \]
          \item if $\mu_n = (1 + \lvert b_n \rvert)/\lvert d_n \rvert$, we set 
              \[
                M_{n+1} = \max\biggl(\smash[t]{\frac{\sqrt{\lvert b_n \rvert}}{\sqrt{\lvert d_n \rvert}}}, \frac{1}{\sqrt{\smash[b]{\lvert d_n \rvert}}}\biggr), \quad
                V_n = \max\left(M_n, \frac{2 + \sqrt{\lvert b_n\rvert}}{\sqrt{\smash[b]{\lvert d_n \rvert}}}\right).
              \]
        \end{enumerate}
        Now all $V_n$, $M_n$ are set, and obviously $V_n \geq M_n$ for any $n > 0$.
        
        Next we are going to present the vector lengths $U_n = \lvert u_n \rvert$, for each $n \geq 0$.
        Set $U_0$ to zero and for any $n > 0$ we have three cases again:
        \begin{enumerate}
          \item when $\mu_n = 1/\lvert a_n \rvert+ 1/\lvert b_n \rvert$, we set 
              $
                U_n = \displaystyle\sqrt{\frac{1}{a_n^2 V_n^2} + \frac{1}{b_n^2 V_{n+1}^2}},
              $ 
          \item whenever $\mu_n = (1 + \lvert a_n \rvert)/\lvert c_n \rvert$, we set 
              $
                U_n =  \dfrac{a_n V_n}{\displaystyle\sqrt{c_n^2 V_{n+1}^2 V_n^2 - 1}},
              $
          \item if $\mu_n = (1 + \lvert b_n \rvert)/\lvert d_n \rvert$, we set 
              $
                U_n = \dfrac{b_n V_{n+1}}{\displaystyle\sqrt{ d_n^2 V_{n+1}^2 V_n^2 - 1}}.
              $
        \end{enumerate}

        Due to our choice of $V_n$, $M_n$ the values $U_n$ are well-defined for each $n > 0$.

        \begin{lemma}
          \label{lemma}
          If for some nonzero real $A$, $B$, $X$, $Y$, $Z$ we have $(AX)^{-2} + (BY)^{-2} = Z^2$, there exist such vectors $x$, $y$, $z$ with lengths
            $X$, $Y$, $Z$ correspondingly such that
          \begin{equation}
            \label{eqn:system}
            \begin{aligned}
              \langle x, y \rangle &= 0,\\
              \langle x, z \rangle &= 1/A,\\
              \langle y, z \rangle &= 1/B.
            \end{aligned}
          \end{equation}
        \end{lemma}
        \begin{proof}
          Take $\alpha$ such that $\cos \alpha  = 1/(AXZ)$ and $\sin \alpha  = 1/(BYZ)$.
          Consider three vectors $x$, $y$, $z$ in $\mathbb{R}^2$ with lengths $X$, $Y$, $Z$ such that
            $\angle(y, z) = \pi/2 - \alpha$ and $\angle(x, z) = \alpha$.
          Clearly, the vectors $x$ and $y$ must be orthogonal now.
          The equations~\eqref{eqn:system} are trivial to check.
        \end{proof}
        \begin{prop}
          For any $n\geq 0$ there are vectors $u_n$, $v_n$ with lengths $U_n$, $V_n$ in $\mathbb{R}^2$ such that~\eqref{eq:vector-eqs} are satisfied.
        \end{prop}
        \begin{proof}
          We argue by induction.
          We start with $v_0 = (0, 1)$ and $u_0 = 0$.
          Suppose that we have constructed a sequence of vectors $v_m, u_m$ for all $m < n$ and $v_n$.
          We are going to build $u_n$ and $v_{n+1}$.
          We consider three cases for $\mu_n$.

          In the first case the chosen $U_n$, $1/\lvert a_n V_n \rvert$ and $1/\lvert b_n  V_{n+1}\rvert$
              form a right triangle with hypotenuse $U_n$, and so here Lemma~\ref{lemma} can be applied.
          It follows that there are vectors $u'_n$, $v'_n$, $v'_{n+1}$ in $\mathbb{R}^2$ with lengths $U_n$, $V_n$, $V_{n+1}$ correspondingly such that
          \begin{equation}
            \label{eq:system1}
            \begin{aligned}
              \langle u'_n, v'_n \rangle &= 1/a_n,\\
              \langle u'_n, v'_{n+1} \rangle &= 1/b_n,\\
              \langle v'_n, v'_{n+1} \rangle &= 0,
            \end{aligned}
          \end{equation}
          which in turn yields the equations~\eqref{eq:vector-eqs}.
          Now we can simply rotate the triple $(u'_n, v'_n, v'_{n+1})$ so that $v'_n$ coincides with $v_n$.
          We will set $u_n$ and $v_{n+1}$ to the rotated $u'_n$ and $v'_{n+1}$ accordingly.
          Since the rotation preserves the scalar product inside the triple, the equations~\eqref{eq:system1} hold for $u_n$, $v_n$, $v_{n+1}$ as well.

          In the second case the chosen $V_{n+1}$, $a_n/(c_n U_n)$ and $1/(c_n V_n)$ also form a right triangle and Lemma~\ref{lemma} applies here
            as well.
          It implies that there are vectors $u'_n$, $v'_n$, $v'_{n+1}$ in $\mathbb{R}^2$ with lengths $U_n$, $V_n$, $V_{n+1}$ correspondingly such that
          \begin{equation}
            \begin{aligned}
              \langle u'_n, v'_n \rangle &= 0,\\
              \langle u'_n, v'_{n+1} \rangle &= a_n/c_n,\\
              \langle v'_n, v'_{n+1} \rangle &= 1/c_n,
            \end{aligned}
          \end{equation}
          and the equations~\eqref{eq:vector-eqs} follow from that.
          Using rotation again, we receive $u_n$, $v_n$, $v_{n+1}$.

          In the third case the chosen $V_n$, $b_n/(d_n U_n)$ and $1/(d_n V_{n+1})$ also form a right triangle and Lemma~\ref{lemma} applies here
            as well.
          It implies that there are vectors $u'_n$, $v'_n$, $v'_{n+1}$ in $\mathbb{R}^2$ with lengths $U_n$, $V_n$, $V_{n+1}$ correspondingly such that
          \begin{equation}
            \begin{aligned}
              \langle u'_n, v'_n \rangle &= b_n/d_n,\\
              \langle u'_n, v'_{n+1} \rangle &= 0,\\
              \langle v'_n, v'_{n+1} \rangle &= -1/d_n,
            \end{aligned}
          \end{equation}
          and the equations~\eqref{eq:vector-eqs} are also true.
          One more time we do the rotation, and we get $u_n$, $v_n$, $v_{n+1}$.
        \end{proof}
        \begin{prop}
          The following inequalities are true.
          \begin{align*}
            M_{n+1} &\leq \sqrt{\mu_n},\\
            V_n &\leq \max(2\sqrt{\mu_n}, M_n),\\
            U_n &\leq 2\sqrt{\mu_n}.
          \end{align*}
        \end{prop}
        \begin{proof}
          First two statements are trivial.
          For the last statement we consider the same three cases.

          In the first case we have $\lvert a_n V_n\rvert \geq \sqrt{\lvert a_n \rvert}$ and
            $\lvert b_n V_{n+1} \rvert \geq  \lvert b_n M_{n+1}\rvert = \sqrt{\lvert b_n \rvert}$.
          It follows that $U_n \leq \sqrt{\mu_n}$.

          In the second case we have $c_n V_{n+1} V_n \geq c_n M_{n+1} V_n \geq 2$ and so
          \[
            U_n \leq \dfrac{\lvert a_n\rvert V_n}{\sqrt{\frac{3}{4} c_n^2 V_{n+1}^2 V_n^2}} \leq 2\dfrac{\lvert a_n\rvert}{\lvert c_n\rvert V_{n+1}}
                \leq 2\dfrac{\lvert a_n\rvert}{\lvert c_n\rvert M_{n+1}} \leq 2\sqrt{\dfrac{\lvert a_n\rvert}{\lvert c_n\rvert}} < 2\sqrt{\mu_n}.
          \]

          In the third case we have $d_n V_{n+1} V_n \geq 2$ and hence
          \[
            U_n \leq \dfrac{\lvert b_n\rvert V_{n+1}}{\sqrt{\frac{3}{4} d_n^2 V_{n+1}^2 V_n^2}} \leq 2\dfrac{\lvert b_n\rvert}{\lvert d_n\rvert V_{n}}
                \leq 2\sqrt{\dfrac{\lvert b_n\rvert}{\lvert d_n\rvert}} < 2\sqrt{\mu_n}.
          \]
        \end{proof}

        The last proposition implies that $V_n$ is bounded up to some constant by $\max(\sqrt{\mu_{n-1}}, \sqrt{\mu_n})$, and
          $U_n \leq \sqrt{\mu_n}$ for any $n > 0$.
        Hence, the constructed sequences $V_n$ and $U_n$ belong to $\ell^2$.
        That finishes the proof of Proposition~\ref{prop:2pd}.
      \end{proof}
      Now due to Corollary~\ref{corol:2density} we get that two point density of $\fsys$ implies the divergence of $\sum_{n=1}^\infty \lvert \mu_n\rvert$,
        which in turn is equivalent to rank one density property of $\fsys$ (see Proposition~\ref{prop:inf-dim}).
      Since rank one density implies $k$ point density for any $k$, Theorem~\ref{thm:5diag} is proved.
    \end{proof}

\bigskip

\section{Acknowledgements}
  The author gratefully acknowledges the many helpful suggestions of Anton Baranov during the preparation of the paper.

\begin {thebibliography}{20}
  \bibitem{argyroslambrou}
    S.~\!Argyros, M.~\!Lambrou and W.E.~\!Longstaff,
    \emph{Atomic Boolean Subspace Lattices and Applications to the Theory of Bases},
    Memoirs. Amer. Math. Soc., No. 445 (1991).

  \bibitem{azoff}
    E.~\!Azoff, H.~\!Shehada,
    \emph{Algebras generated by mutually orthogonal idempotent operators},
    J. Oper. Theory, 29 (1993), 2, 249--267.

  \bibitem{bbb}
    A.~\!Baranov, Y.~\!Belov and A.~\!Borichev,
    \emph{Hereditary completeness for systems of exponentials and reproducing kernels},
    Adv. Math., 235 (2013), 1, 525--554.

  \bibitem{bbb1}
    A.~\!Baranov, Y.~\!Belov and A.~\!Borichev,
    \emph{Spectral synthesis in de Branges spaces},
    Geom. Funct. Anal. (GAFA), 25 (2015), 2, 417--452.

  \bibitem{ad_preprint}
    A.D.~\!Baranov, D.V.~\!Yakubovich,
    \emph{Completeness and spectral synthesis of nonselfadjoint one-dimensional
    perturbations of selfadjoint operators},
    Advances in Mathematics, 302 (2016), 740-798;

  \bibitem{erdos}
    J.A.~\!Erdos,
    \emph{Operators of finite rank in nest algebras},
    J. London Math. Soc., 43 (1968), 391--397.

  \bibitem{review}
    J.A.~\!Erdos,
    \emph{Basis theory and operator algebras},
    In: A.~\!Katavolos (ed.), Operator Algebras and Application, Kluwer Academic Publishers, 1997, pp. 209--223.

  \bibitem{katavolos}
    A.~\!Katavolos, M.~\!Lambrou and M.~\!Papadakis,
    \emph{On some algebras diagonalized by $M$-bases of $\ell^2$},
    Integr. Equat. Oper. Theory, 17 (1993), 1, 68--94.

  \bibitem{larson}
    D.~\!Larson, W.~\!Wogen,
    \emph{Reflexivity properties of $T\bigoplus0$},
    J. Funct. Anal., 92 (1990), 448--467.

  \bibitem{laurielongstaff}
    C.~\!Laurie, W.~\!Longstaff,
    \emph{A note on rank one operators in reflexive algebras},
    Proc. Amer. Math. Soc., 89 (1983), 293--297.

  \bibitem{longstaff}
    W.E.~\!Longstaff,
    \emph{Operators of rank one in reflexive algebras},
    Canadian J. Math., 27 (1976), 19--23.

  \bibitem{raney}
    G.N.~\!Raney,
    \emph{Completely distributive complete lattices},
    Proc. Amer. Math. Soc. 3 (1952), 677--680.

  \bibitem{me1}
    A.~\!Pyshkin,
    \emph{Summation methods for Fourier series with respect to the Azoff–Shehada system},
    Investigations on linear operators and function theory. Part 43, Zap. Nauchn. Sem. POMI, 434, POMI, St. Petersburg, 2015, 116--125; J. Math. Sci. (N. Y.), 215:5 (2016), 617–-623

\end{thebibliography}

\end{document}